\newtheorem{theorem}{Theorem}[section]
\newtheorem{corollary}[theorem]{Corollary}
\newtheorem{lemma}[theorem]{Lemma}
\newtheorem{conjecture}[theorem]{Conjecture}
\newtheorem{definition}[theorem]{Definition}
\newtheorem{question*}{Question}
\newtheorem{problem*}{Problem}
\theoremstyle{definition}
\newtheorem{example}[theorem]{Example}
\theoremstyle{remark}
\newtheorem{remark}{Remark}
\numberwithin{equation}{section}
\newcommand{\Z}{\mathbb{Z}}
\newcommand{\R}{\mathbb{R}}
\newcommand{\Q}{\mathbb{Q}}
\newcommand{\F}{\mathbb{F}}
\renewcommand{\pmod}[1]{\, (\mathrm{mod} {\, #1})}
\newcommand{\ord}{\mathop{\mathrm{ord}}}
\renewcommand{\Re}{\mathrm{Re}}
\patchcmd{\section}{\scshape}{\bfseries}{}{}
\renewcommand{\@secnumfont}{\bfseries}
\makeatletter\newcommand{\tpmod}[1]{{\@displayfalse\pmod{#1}}}
\begin{document}

\title{A New Aspect of Chebyshev's Bias for Elliptic Curves over Function Fields}

\author{Ikuya Kaneko}
\address{The Division of Physics, Mathematics and Astronomy, California Institute of Technology, 1200 E. California Blvd., Pasadena, CA 91125, USA}
\email{ikuyak@icloud.com}
\urladdr{\href{https://sites.google.com/view/ikuyakaneko/}{https://sites.google.com/view/ikuyakaneko/}}

\author{Shin-ya Koyama}
\address{Department of Biomedical Engineering, Toyo University, 2100 Kujirai, Kawagoe, Saitama, 350-8585, Japan}
\email{koyama@tmtv.ne.jp}
\urladdr{\href{http://www1.tmtv.ne.jp/~koyama/}{http://www1.tmtv.ne.jp/~koyama/}}

\date{\today}

\subjclass[2020]{Primary: 11G40; Secondary: 11M38}

\keywords{Deep Riemann Hypothesis, Grand Riemann Hypothesis, Birch--Swinnerton-Dyer conjecture, $L$-functions, elliptic curves, function fields, Chebyshev's bias}

\begin{abstract}
This \textcolor{black}{work considers} the prime number races for non-constant elliptic curves~$E$ over function fields. We prove that if $\mathrm{rank}(E) > 0$, then there exist Chebyshev biases~towards being negative, and otherwise there exist Chebyshev biases towards being positive. \textcolor{black}{The main innovation entails} the convergence of the \textcolor{black}{partial} Euler product at the centre \textcolor{black}{that~follows~from} the Deep Riemann Hypothesis over function fields.
\end{abstract}

\maketitle

\section{Introduction}\label{Introduction}
In 1853, Chebyshev noticed in a letter to Fuss that primes congruent to $3$ modulo $4$ seem to predominate over those congruent to $1$ modulo $4$. If $\pi(x; q, a)$ \textcolor{black}{signifies} the number of primes $p \leq x$ such that $p \equiv a \tpmod q$, then the inequality $\pi(x; 4, 3) \geq \pi(x; 4, 1)$ holds for more than 97~\% of $x < 10^{11}$. On the other hand, were we to leverage Dirichlet's theorem~on~primes in arithmetic progressions, then it \textcolor{black}{is} expected that the number of the primes of the form~$4k+1$ and $4k+3$ should be asymptotically equal. Therefore, the Chebyshev~bias indicates that the primes of the form $4k+3$ appear earlier than those of the form $4k+1$. \textcolor{black}{Classical triumphs include the work of} Littlewood~\cite{Littlewood1914} \textcolor{black}{who} established that $\pi(x; 4, 3)-\pi(x; 4, 1)$ changes its sign infinitely often. \textcolor{black}{Knapowski--Tur\'{a}n~\cite{KnapowskiTuran1962} conjectured that the density of the numbers $x$~for~which $\pi(x; 4, 3) \geq \pi(x; 4, 1)$ holds is $1$. Nonetheless, the work of Kaczorowski~\cite{Kaczorowski1995} shows the falsity of their conjecture conditionally on the Generalised Riemann Hypothesis. Note that they have a logarithmic density via work of Rubinstein--Sarnak~\cite{RubinsteinSarnak1994}, which is approximately $0.9959 \cdots$.}

In this fundamental scenario, the next layer of methodological depth came with the introduction of a weighted counting function that allows one to scrutinise the above phenomenon. \textcolor{black}{The work of} Aoki--Koyama~\cite{AokiKoyama2023} \textcolor{black}{introduces} the counting function
\begin{equation}\label{weighted-counting-function}
\pi_{s}(x; q, a) \coloneqq \sum_{\substack{p \leq x \\ p \equiv a \tpmod q}} \frac{1}{p^{s}}, \qquad s \geq 0,
\end{equation}
extending $\pi(x; q, a) = \pi_{0}(x; q, a)$, where the smaller prime $p$ permits a higher contribution to $\pi_{s}(x; q, a)$ as long as we fix $s > 0$. The function $\pi_{s}(x; q, a)$ ought to be more appropriate~than $\pi(x; q, a)$ to represent the phenomenon, since it reflects the size of the primes that $\pi(x; q, a)$ ignores. Although the natural density of the set $\{x > 0 \mid \pi_{s}(x; 4, 3)-\pi_{s}(x; 4, 1) > 0 \}$~does~not exist when $s = 0$ under the Generalised Riemann Hypothesis (see the article~\cite{Kaczorowski1995}), they have shown under~the Deep Riemann Hypothesis (DRH) that it~would be equal to $1$ when $s = 1/2$. \textcolor{black}{More precisely}, the Chebyshev bias could~be realised in terms of the asymptotic formula
\begin{equation}\label{cheby}
\pi_{\frac{1}{2}}(x; 4, 3)-\pi_{\frac{1}{2}}(x; 4, 1) = \frac{1}{2} \log \log x+c+o(1)
\end{equation}
 as $x \to \infty$, where $c$ is a constant.

The Chebyshev~biases for prime ideals $\frak{p}$ of a global~field~$K$ \textcolor{black}{are formulated as~follows}.
\begin{definition}[Aoki--Koyama~\cite{AokiKoyama2023}]\label{definition-1}
Let $c(\mathfrak{p}) \in \R$ be a sequence over primes $\mathfrak{p}$ of $K$ such~that
\begin{equation*}
\lim_{x \to \infty} \frac{\#\{\mathfrak{p} \mid c(\mathfrak{p}) > 0, \mathrm{N}(\mathfrak{p}) \leq x \}}
{\#\{\mathfrak{p} \mid c(\mathfrak{p}) < 0, \mathrm{N}(\mathfrak{p}) \leq x \}} = 1.
\end{equation*}
We say that $c(\mathfrak{p})$ has a \textit{Chebyshev bias towards being positive} if there exists a constant $C > 0$ such that
\begin{equation*}
\sum_{\mathrm{N}(\mathfrak{p}) \leq x} \frac{c(\mathfrak{p})}{\sqrt{\mathrm{N}(\mathfrak{p})}} \sim C \log \log x,
\end{equation*}
where $\mathfrak{p}$ ranges over primes of $K$. On the other hand, we say that $c(\mathfrak{p})$ is \textit{unbiased} if
\begin{equation*}
\sum_{\mathrm{N}(\mathfrak{p}) \leq x} \frac{c(\mathfrak{p})}{\sqrt{\mathrm{N}(\mathfrak{p})}} = O(1).
\end{equation*}
\end{definition}

\begin{definition}[Aoki--Koyama~\cite{AokiKoyama2023}]\label{definition-2}
Assume that the set of all primes $\mathfrak{p}$ of $K$ with $\mathrm{N}(\mathfrak{p}) \leq x$ is expressed as a disjoint union $P_{1}(x) \cup P_{2}(x)$ and that their proportion converges to
\begin{equation*}
\delta = \lim_{x \to \infty} \frac{|P_{1}(x)|}{|P_{2}(x)|}.
\end{equation*}
We say that there exists a Chebyshev bias towards $P_{1}$ or a Chebyshev bias against $P_{2}$ if
\begin{equation*}
\sum_{p \in P_{1}(x)} \frac{1}{\sqrt{\mathrm{N}(\mathfrak{p})}}
 - \delta \sum_{p \in P_{2}(x)} \frac{1}{\sqrt{\mathrm{N}(\mathfrak{p})}}
\sim C \log \log x
\end{equation*}
for some $C > 0$. On the other hand, we say that there exist no biases between $P_{1}$ and $P_{2}$ if
\begin{equation*}
\sum_{p \in P_{1}(x)} \frac{1}{\sqrt{\mathrm{N}(\mathfrak{p})}}
 - \delta \sum_{p \in P_{2}(x)} \frac{1}{\sqrt{\mathrm{N}(\mathfrak{p})}} = O(1).
\end{equation*}
\end{definition}

\textcolor{black}{Definitions~\ref{definition-1} and~\ref{definition-2} differ from those in~\cite{AkbaryNgShahabi2014,Devin2020,RubinsteinSarnak1994} and formulate an asymptotic formula for the size of the discrepancy caused by Chebyshev's bias disregarded in the conventional definitions of the length of the interval in terms of the limiting distributions. Definitions~\ref{definition-1} and~\ref{definition-2} involve little information on the density distributions and both types of formulations appear not to have any logical connections. They shed some light on Chebyshev's bias from different directions.}

It is \textcolor{black}{shown} in~\cite[Corollary 3.2]{AokiKoyama2023} that the Chebyshev bias~\eqref{cheby} against the quadratic residue $1 \tpmod 4$ is equivalent to the convergence of the Euler product at the centre $s = 1/2$ for the Dirichlet $L$-function $L(s, \chi_{-4})$ with $\chi_{-4}$ denoting the nontrivial character modulo $4$. This is \textcolor{black}{related} to DRH proposed by Kurokawa~\cite{KanekoKoyamaKurokawa2022,Kurokawa2012} in 2012.

Aoki--Koyama~\cite{AokiKoyama2023} have observed various instances of Chebyshev biases that certain prime ideals in Galois extensions of global fields have over others, with an emphasis on the biases against splitting primes and principal prime ideals. Koyama--Kurokawa~\cite{KoyamaKurokawa2022} have obtained~an analogue of this phenomenon for Ramanujan's $\tau$-function $\tau(p)$ and shown that DRH for the automorphic $L$-function $L(s+11/2, \Delta)$ implies the bias of $\tau(p)/p^{11/2}$ towards positive values.

In this work, we delve into such phenomena on the prime number races that elliptic~curves over function fields give rise to. Let $E$ be an elliptic curve over $K$ and let $E_{v}$ be the reduction of $E$ on the residue field $k_{v}$ at a finite place $v$ of $K$. If $E$ has good reduction at $v$, we define
\begin{equation*}
a_{v} = a_{v}(E) \coloneqq q_{v}+1-\#E_{v}(k_{v}),
\end{equation*}
where $q_{v} = \#k_{v}$ and $\#E_{v}(k_{v})$ is the number of $k_{v}$-rational points on $E_{v}$. The symbol $a_{v}$ can be extended to all other finite places $v$ as follows:
\begin{equation*}
a_{v} \coloneqq 
	\begin{cases}
	1 & \text{if $E$ has split multiplicative reduction at $v$},\\
	-1 & \text{if $E$ has nonsplit multiplicative reduction at $v$},\\
	0 & \text{if $E$ has additive reduction at $v$}.
\end{cases}
\end{equation*}
\textcolor{black}{This work aims} to prove the following asymptotic \textcolor{black}{corresponding to the case of $s = 1/2$~in~\eqref{weighted-counting-function}}.
\begin{theorem}\label{th2}
Assume $\mathrm{char}(K) > 0$ and that $E$ is a non-constant elliptic curve over $K$ in the \textcolor{black}{terminology} of Ulmer~\cite[Definitions 1.1.4]{Ulmer2011}. If $\mathrm{rank}(E) > 0$, then the sequence $a_{v}/\sqrt{q_{v}}$ has a Chebyshev bias towards being negative. To be more precise, we have that
\begin{equation}\label{biascor1}
\sum_{q_{v} \leq x} \frac{a_{v}}{q_{v}} = \textcolor{black}{\left(\frac{1}{2}-\mathrm{rank}(E) \right)} \log \log x+O(1).
\end{equation}
\end{theorem}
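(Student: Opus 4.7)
The plan is to combine the Deep Riemann Hypothesis for the Hasse--Weil $L$-function $L(s, E)$ with a careful logarithmic expansion of its partial Euler product at the centre, together with Mertens-type asymptotics for two auxiliary sums. Write $L(s, E) = \prod_v L_v(s)$ in the arithmetic normalisation, so that the Euler factor at a place $v$ of good reduction is $L_v(s) = (1 - \alpha_v q_v^{-s})^{-1}(1 - \beta_v q_v^{-s})^{-1}$ with $\alpha_v + \beta_v = a_v$, $\alpha_v \beta_v = q_v$, and $|\alpha_v| = |\beta_v| = \sqrt{q_v}$ by Hasse's bound; the centre of the functional equation is $s = 1$. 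For a non-constant elliptic curve over a function field, the Birch--Swinnerton-Dyer identification $\mathrm{ord}_{s = 1} L(s, E) = \mathrm{rank}(E) =: r$ is a theorem in this setting (via Artin--Tate, Milne, and Kato--Trihan). The DRH then supplies the key asymptotic
\[
\prod_{q_v \leq x} L_v(1) \sim C (\log x)^{-r} \quad \text{as } x \to \infty,
\]
equivalently $\sum_{q_v \leq x} \log L_v(1) = -r \log \log x + O(1)$.

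Next I would expand each logarithm via the Frobenius eigenvalues,
\[
\log L_v(1) = \sum_{k \geq 1} \frac{\alpha_v^k + \beta_v^k}{k \, q_v^k} = \frac{a_v}{q_v} + \frac{a_v^2 - 2 q_v}{2 q_v^2} + R_v,
\]
where $R_v = O(q_v^{-3/2})$ by Hasse's bound, so $\sum_v R_v$ converges absolutely. The finitely many places of bad reduction contribute $O(1)$ in total and may be absorbed in the error.

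The final step is to evaluate the two Mertens-type subsums that remain. The prime ideal theorem for $K$ yields $\sum_{q_v \leq x} 1/q_v = \log \log x + O(1)$. For the $a_v^2$-sum, I would invoke the Rankin--Selberg factorisation $L(s, E \otimes E) = L(s, \mathrm{Sym}^2 E) \cdot \zeta_K(s - 1)$, whose simple pole at $s = 2$ inherited from $\zeta_K(s - 1)$ transfers, by a Tauberian argument, to $\sum_{q_v \leq x} a_v^2 / q_v^2 = \log \log x + O(1)$. Substituting these three estimates into the expansion and comparing with the DRH asymptotic yields
\[
\sum_{q_v \leq x} \frac{a_v}{q_v} = -r \log \log x - \tfrac{1}{2} \log \log x + \log \log x + O(1) = \Big( \tfrac{1}{2} - r \Big) \log \log x + O(1),
\]
which is~\eqref{biascor1}.

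The subtlest point is the second-order Mertens estimate $\sum_{q_v \leq x} a_v^2 / q_v^2 = \log \log x + O(1)$, because the interaction between this term and the $-\sum 1/q_v$ coming from $\alpha_v \beta_v = q_v$ is precisely what produces the $\tfrac{1}{2}$ shift in~\eqref{biascor1}. The constant in front of $\log \log x$ must therefore be pinned down exactly, not merely up to sign; this requires non-vanishing of $L(s, \mathrm{Sym}^2 E)$ at $s = 2$ and a sufficiently effective form of the Rankin--Selberg asymptotic, both of which are accessible in the function field setting because the relevant $L$-functions are polynomials in $q^{-s}$, considerably simplifying the Tauberian transfer relative to the number field case.
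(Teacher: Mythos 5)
Your analytic skeleton is essentially the paper's own argument in a different normalisation: expanding $\log$ of the partial Euler product at the centre into the first-order sum, the second-order sum, and an absolutely convergent tail is exactly the decomposition $\mathrm{I}(x)+\mathrm{I\hspace{-.1mm}I}(x)+\mathrm{I\hspace{-.1mm}I\hspace{-.1mm}I}(x)$ of Theorem~\ref{theorem1}, and your Rankin--Selberg bookkeeping $L(s,E\otimes E)=L(s,\mathrm{Sym}^{2}E)\,\zeta_{K}(s-1)$ together with nonvanishing of $\mathrm{Sym}^{2}$ at the edge is the same computation as Lemma~\ref{delta=-1} (there phrased as $\delta(M)=-1$ via $L(s,M^{2})=L(s,\mathrm{Sym}^{2}M)/L(s,\wedge^{2}M)$ and \eqref{ord1}), justified as in the paper by the fact that the symmetric-power $L$-functions of a non-constant curve are polynomials with zeros on the critical line. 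Two inputs you take for granted should be made explicit to keep the statement unconditional: that $L(s,M_{E})$ is entire (Ulmer: a polynomial in $q^{-s}$ for non-constant $E$), and that DRH is a \emph{theorem} in characteristic $p$ (Conrad; Kaneko--Koyama--Kurokawa) --- as written you simply invoke ``the DRH,'' which would leave the result conditional.

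The genuine gap is your assertion that the rank identity $\mathrm{ord}_{s=1}L(s,E)=\mathrm{rank}(E)$ ``is a theorem in this setting (via Artin--Tate, Milne, and Kato--Trihan).'' It is not: over function fields one knows unconditionally only the inequality $\mathrm{rank}(E)\leq \mathrm{ord}_{s=1}L(s,E)$ (Tate; this is Ulmer's Theorem 12.1(1) used in the paper), with equality equivalent to finiteness of the Tate--Shafarevich group; Kato--Trihan prove BSD only under finiteness of some $\ell$-primary part of Sha, which is open in general. Accordingly, what the argument actually delivers is \eqref{biascor1} with the analytic rank $m_{1}=\mathrm{ord}_{s=1/2}L(s,M_{E})$ in place of $\mathrm{rank}(E)$ --- this is precisely what the paper proves in Theorem~\ref{maintheorem} --- and the qualitative conclusion for $\mathrm{rank}(E)>0$ then follows from $\mathrm{rank}(E)\leq m_{1}$, since the coefficient $\tfrac12-m_{1}$ is negative. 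To write $\mathrm{rank}(E)$ itself in the asymptotic, as in the statement, you would need the full BSD rank equality, which cannot be cited as unconditional; your proof as written silently assumes it, so the step pinning the constant to $\tfrac12-\mathrm{rank}(E)$ does not go through on known results.
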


The proof uses the convergence of the Euler product at the centre, which follows from~DRH over function fields due to Conrad~\cite{Conrad2005} and Kaneko--Koyama--Kurokawa~\cite{KanekoKoyamaKurokawa2022}; see~\S\ref{deep-Riemann-hypothesis} for details.

\section*{Acknowledgements}\label{acknowledgements}
The authors would like to thank Nobushige Kurokawa for instructive discussions. The~first author acknowledges the support of the Masason Foundation and the Spirit of Ramanujan STEM Talent Initiative. The second author was partially supported by the INOUE ENRYO Memorial Grant 2022, TOYO University.

\section{Deep Riemann Hypothesis}\label{deep-Riemann-hypothesis}
Let $K$ be a one-dimensional global field that is either a number field or a function field in one variable over a finite field. For a place $v$ of $K$, let $M(v)$ denote a unitary matrix~of~degree $r_{v} \in \mathbb{N}$. We consider an $L$-function expressed as the Euler product
\begin{equation}\label{EP}
L(s, M) = \prod_{v < \infty} \det(1-M(v) q_{v}^{-s})^{-1},
\end{equation}
where  $q_{v}$ is the cardinal of the residue field $k_{v}$ at $v$. The product~\eqref{EP} is absolutely convergent in $\Re(s) > 1$. In this article, we assume that $L(s, M)$ has an analytic continuation as an~entire function on $\mathbb{C}$ and a functional equation relating values at $s$ and $1-s$. Moreover, we set
\begin{equation*}
\delta(M) = -\ord_{s = 1} L(s, M^{2}),
\end{equation*}
where $\ord_{s = 1}$ signifies the order of the zero at $s = 1$. We here do not presuppose that $M$ is~a representation. The square $M^{2}$ is interpreted as an Adams operation. Note that since
\begin{equation*}
L(s, M^{2}) = \prod_{v < \infty} \det(1-M(v)^{2} q_{v}^{-s})^{-1} = \frac{L(s, \mathrm{Sym}^{2} M)}{L(s, \wedge^{2} M)},
\end{equation*}
we derive the expression
\begin{equation}\label{delta}
\delta(M) = -\ord_{s = 1} L(s, \mathrm{Sym}^{2} M)+\ord_{s = 1} L(s, \wedge^{2} M),
\end{equation}
where $\mathrm{Sym}^{2}$ and $\wedge^{2}$ denote the symmetric and the exterior squares, respectively. If we~assume that $M$ is an Artin representation
\begin{equation*}
\rho \colon \mathrm{Gal}(K^{\mathrm{sep}}/K) \to \mathrm{Aut}_{\mathbb{C}}(V), \qquad \rho \ne \mathbbm{1}
\end{equation*}
with a representation space $V$, then
\begin{equation*}
\delta(M) = \mathrm{mult}(\mathbbm{1}, \mathrm{Sym}^{2} \rho)-\mathrm{mult}(\mathbbm{1}, \wedge^{2} \rho),
\end{equation*}
where $\mathrm{mult}(\mathbbm{1}, \sigma)$ is the multiplicity of the trivial representation $\mathbbm{1}$ in $\sigma$.

We are now in a position to formulate DRH due to Kurokawa~\cite{KanekoKoyamaKurokawa2022,Kurokawa2012} in a general context.
\begin{conjecture}[Deep Riemann Hypothesis]\label{DRH}
Keep the assumptions and notation as above. Let $m = \ord_{s = 1/2} L(s, M)$. Then the limit
\begin{equation}\label{limit}
\lim_{x \to \infty} \left((\log x)^{m} \prod_{q_{v} \leq x} \det \left(1-M(v) q_{v}^{-\frac{1}{2}} \right)^{-1} \right)
\end{equation}
satisfies the following conditions:
\begin{description}
\item[DRH (A)] The limit~\eqref{limit} exists and is nonzero.
\item[DRH (B)] The limit~\eqref{limit} satisfies
\begin{equation*}
\lim_{x \to \infty} \left((\log x)^{m} \prod_{q_{v} \leq x} \det \left(1-M(v) q_{v}^{-\frac{1}{2}} \right)^{-1} \right)
 = \frac{\sqrt{2}^{\delta(M)}}{e^{m \gamma} m!} \cdot L^{(m)} \left(\frac{1}{2}, M \right).
\end{equation*}
\end{description}
\end{conjecture}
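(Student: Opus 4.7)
The plan is to apply Conjecture~\ref{DRH}, which in the present function field setting is a theorem due to Conrad~\cite{Conrad2005} and Kaneko--Koyama--Kurokawa~\cite{KanekoKoyamaKurokawa2022}, to the analytically normalised $L$-function of $E$. At a good prime $v$ one has a unitary matrix $M(v)$ of degree two whose eigenvalues are $\alpha_v/\sqrt{q_v}$ and $\beta_v/\sqrt{q_v}$, so that $\mathrm{tr}(M(v)) = a_v/\sqrt{q_v}$ and $\det(M(v)) = 1$; at the finitely many bad primes $M(v)$ is the $1 \times 1$ matrix $(a_v) \in \{\pm 1, 0\}$, whose total contribution to every forthcoming sum is $O(1)$. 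Since $E$ is non-constant, $L(s, E)$ is a polynomial in $q^{-s}$ by Grothendieck's theorem and satisfies the required functional equation, so the hypotheses of Conjecture~\ref{DRH} are in force; moreover, the Artin--Tate formalism for non-constant elliptic curves over function fields (as in Ulmer~\cite{Ulmer2011}) identifies $m \coloneqq \mathrm{ord}_{s = 1/2} L(s + 1/2, E)$ with $\mathrm{rank}(E)$.

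Taking the logarithm of the limit in DRH~(A) and expanding $-\log \det(1 - M(v) q_v^{-1/2}) = \sum_{k \geq 1} k^{-1} \mathrm{tr}(M(v)^{k}) q_v^{-k/2}$, one obtains
\begin{equation*}
m \log \log x + \sum_{q_v \leq x} \frac{a_v}{q_v} + \frac{1}{2} \sum_{q_v \leq x} \frac{\mathrm{tr}(M(v)^{2})}{q_v} = O(1),
\end{equation*}
where the $k \geq 3$ tail is absorbed into the $O(1)$ upon invoking the bound $\#\{v : q_v = q^{n}\} \ll q^{n}/n$. The remaining problem is to estimate $\sum_{q_v \leq x} \mathrm{tr}(M(v)^{2})/q_v$ through a Mertens-type argument applied to $L(s, M^{2})$. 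Because $\det M(v) = 1$ at every good prime, the factorisation $L(s, M^{2}) = L(s, \mathrm{Sym}^{2} M)/L(s, \wedge^{2} M)$ specialises to $L(s, \wedge^{2} M) = \zeta_{K}(s)$, which has a simple pole at $s = 1$; given that $L(s, \mathrm{Sym}^{2} M)$ is holomorphic and non-vanishing at $s = 1$, formula~\eqref{delta} yields $\delta(M) = -1$, so $L(s, M^{2})$ has a simple zero at $s = 1$ and a standard Tauberian theorem gives $\sum_{q_v \leq x} \mathrm{tr}(M(v)^{2})/q_v = -\log \log x + O(1)$.

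Substituting back then furnishes
\begin{equation*}
\sum_{q_v \leq x} \frac{a_v}{q_v} = \left(\frac{1}{2} - m \right) \log \log x + O(1) = \left(\frac{1}{2} - \mathrm{rank}(E) \right) \log \log x + O(1),
\end{equation*}
which is precisely~\eqref{biascor1}. I foresee two main obstacles. The first is the identification $m = \mathrm{rank}(E)$, which hinges on BSD in positive characteristic: although Artin--Tate reduces this to finiteness of the Brauer group of the associated elliptic surface, one must isolate the precise results in Ulmer~\cite{Ulmer2011} under which this is known for non-constant $E$. The more delicate analytic point is the non-vanishing and regularity of $L(s, \mathrm{Sym}^{2} E)$ at the edge $s = 1$; I plan to handle this via the Rankin--Selberg factorisation $L(s, E \otimes E) = L(s, \mathrm{Sym}^{2} E) \, \zeta_{K}(s)$ combined with the Jacquet--Shalika-type non-vanishing of cuspidal $\mathrm{GL}_{n}$ $L$-functions on $\mathrm{Re}(s) = 1$ in the function field setting, which together rule out any unwanted correction to the computed value $\delta(M) = -1$.
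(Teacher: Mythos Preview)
Your proposal is not a proof of the displayed statement at all: Conjecture~\ref{DRH} is the Deep Riemann Hypothesis itself, and the paper does not prove it in-text but simply records (Theorem~\ref{DRHchar>0}) that it is known in positive characteristic by \cite{Conrad2005,KanekoKoyamaKurokawa2022}. What you have written is a proof of Theorem~\ref{th2}, the main application, \emph{using} DRH as a black box. Assuming that is what was intended, your argument is essentially the paper's own: take logarithms in DRH~(A), expand each local factor as $\sum_{k\geq 1}k^{-1}\mathrm{tr}(M(v)^{k})q_{v}^{-k/2}$, bound the $k\geq 3$ tail by $O(1)$, handle the $k=2$ term via a Mertens-type estimate together with $\delta(M)=-1$, and read off the $k=1$ sum. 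This is exactly Theorem~\ref{theorem1} combined with Lemma~\ref{delta=-1} and Theorem~\ref{maintheorem}.

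Two points of comparison are worth noting. First, for $\delta(M)=-1$ the paper does not invoke Rankin--Selberg or Jacquet--Shalika; it uses the Riemann Hypothesis over function fields directly, observing that $L(s,\mathrm{Sym}^{n}M)$ is a polynomial in $q^{-s}$ all of whose zeros lie on $\Re(s)=1/2$, hence $\mathrm{ord}_{s=1}L(s,\mathrm{Sym}^{2}M)=0$ (equation~\eqref{ord1}). Your route via $L(s,E\otimes E)$ would also work but is more machinery than needed. Second, and more substantively, the paper does \emph{not} establish $m=\mathrm{rank}(E)$. Its proof (Theorem~\ref{maintheorem}) yields the asymptotic with coefficient $\tfrac{1}{2}-m_{1}$, where $m_{1}=\mathrm{ord}_{s=1/2}L(s,M_{E})$, and then appeals only to Ulmer's inequality $\mathrm{rank}(E)\leq m_{1}$ to conclude that the bias is negative when $\mathrm{rank}(E)>0$. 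The formula~\eqref{biascor1} as literally stated, with $\mathrm{rank}(E)$ in place of $m_{1}$, would indeed require the BSD equality you flag as your first obstacle; the paper sidesteps rather than resolves this, so your caution there is well placed and in fact slightly sharper than what the paper's proof section actually delivers.
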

It is obvious that DRH (B) indicates DRH (A). Nonetheless, DRH (A) is still meaningful since it is essentially equivalent to the Chebyshev biases. The following instances clarify this situation; we refer the reader to the work of Aoki--Koyama~\cite{AokiKoyama2023} for more interesting examples.
\begin{example}[Aoki--Koyama~\cite{AokiKoyama2023}]
If $K = \Q$, we denote a prime number by $v = p$. If~$r_{p} = 1$ for any $p$ and $M$ is the nontrivial Dirichlet character modulo 4, namely $M(p) = \chi_{-4}(p)$,~then we have that $L(s, M) = L(s, \chi_{-4})$ and $\delta(M) = 1$. It is shown in their work~\cite{AokiKoyama2023} that DRH~(A) for $L(s,\chi_{-4})$ is equivalent to the original form of the Chebyshev bias~\eqref{cheby}.
\end{example}

\begin{example}[Koyama--Kurokawa~\cite{KoyamaKurokawa2022}]
Let $K = \Q$ and $r_{p} = 2$ for any $p$, and let $\tau(p) \in \Z$ be Ramanujan's $\tau$-function defined for $q = e^{2\pi iz}$ with $\mathrm{Im}(z) > 0$ by
\begin{equation*}
\Delta(z) = q \prod_{k = 1}^{\infty} (1-q^{k})^{24} = \sum_{n = 1}^{\infty} \tau(n) q^{n}.
\end{equation*}
We introduce $M(p) = \begin{pmatrix} e^{i\theta_{p}} & 0 \\ 0 & e^{-i\theta_{p}} \end{pmatrix}$, where the Satake parameter $\theta_{p} \in [0, \pi] \cong \mathrm{Conj}(\mathrm{SU}(2))$~is defined by $\tau(p) = 2p^{11/2} \cos(\theta_{p})$. It then follows that the associated $L$-function
\begin{equation*}
L(s, M) = \prod_{p}(1-2\cos(\theta_{p})p^{-s}+p^{-2s})^{-1}
\end{equation*}
satisfies a functional equation for $s \leftrightarrow 1-s$. Using the conventional notation of~Ramanujan's $L$-function
\begin{equation*}
L(s, \Delta) = \sum_{n=1}^{\infty} \frac{\tau(n)}{n^{s}},
\end{equation*}
which satisfies a functional equation for $s \leftrightarrow 12-s$, we have that $L(s, M) = L(s+11/2, \Delta)$ and $\delta(M) = -1$. It is shown in~\cite{KoyamaKurokawa2022} that DRH (A) for $L(s, M) = L(s+11/2, \Delta)$ implies that there exists a Chebyshev bias for the sequence $\tau(p)/p^{11/2}$ towards being~positive.
\end{example}

Conjecture~\ref{DRH} is known to hold when the characteristic is positive. The proof was given~by Conrad~\cite[Theorems 8.1 and 8.2]{Conrad2005} under the second moment hypothesis, and the full proof was offered by Kaneko--Koyama--Kurokawa~\cite[Theorem 5.5]{KanekoKoyamaKurokawa2022}. We record their result as follows.
\begin{theorem}[Kaneko--Koyama--Kurokawa~\cite{KanekoKoyamaKurokawa2022}]\label{DRHchar>0}
Conjecture~\ref{DRH} is valid for $\mathrm{char}(K) > 0$.
\end{theorem}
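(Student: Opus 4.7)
The strategy rests on two features special to positive characteristic. First, Grothendieck--Deligne theory realises $L(s, M)$ as a polynomial $P(q^{-s}) = \prod_{j}(1 - \beta_{j} q^{-s})$ in $q^{-s}$, where $q$ is the cardinality of the constant subfield of $K$, and the Riemann Hypothesis (Weil--Deligne) forces $|\beta_{j}| = q^{1/2}$ for every $j$. Second, $L(s, M^{2}) = L(s, \mathrm{Sym}^{2} M)/L(s, \wedge^{2} M)$ is likewise a rational function of $q^{-s}$ whose order at $s = 1$ is exactly $-\delta(M)$ by~\eqref{delta}. Set $X \coloneqq \log_{q} x$ and $\alpha_{j} \coloneqq \beta_{j} q^{-1/2}$, so $|\alpha_{j}| = 1$, and by definition of $m$ exactly $m$ of the $\alpha_{j}$ equal $1$.

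Expanding $-\log\det(1 - A) = \sum_{k \geq 1} \mathrm{tr}(A^{k})/k$ and regrouping the double sum over places and $k$ by the variable $n = k\deg v$ produces
\begin{equation*}
\log\prod_{q_{v} \leq x}\det(1 - M(v) q_{v}^{-1/2})^{-1} = -\sum_{n \leq X}\frac{1}{n}\sum_{j}\alpha_{j}^{n} + R(X),
\end{equation*}
where the main term uses the coefficient identity $\sum_{d \mid n} d\sum_{\deg v = d}\mathrm{tr}(M(v)^{n/d}) = -\sum_{j}\beta_{j}^{n}$ (obtained by matching the Euler-product and polynomial expansions of $\log L(s, M)$ as Dirichlet series in $q^{-s}$), and $R(X) = \sum_{k \geq 2}\, k^{-1}\sum_{X/k < d \leq X} S_{d}^{(k)}/q^{kd/2}$ with $S_{d}^{(k)} = \sum_{\deg v = d}\mathrm{tr}(M(v)^{k})$ collects the pairs $(\deg v, k)$ with $\deg v \leq X < k\deg v$.

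The main sum is handled by splitting on $\alpha_{j} = 1$ versus $\alpha_{j} \neq 1$. The $m$ indices with $\alpha_{j} = 1$ contribute $-m\sum_{n \leq X} 1/n = -m\log X - m\gamma + o(1)$, while for each $\alpha_{j} \neq 1$ on the unit circle Dirichlet summation yields $\sum_{n \leq X}\alpha_{j}^{n}/n \to -\log(1 - \alpha_{j})$. Taylor-expanding $L(s, M)$ at $s = 1/2$ identifies $\prod_{\alpha_{j} \neq 1}(1 - \alpha_{j}) = L^{(m)}(1/2, M)/(m!\,(\log q)^{m})$. For $R(X)$, the $k \geq 3$ slices are absolutely bounded; the $k = 2$ slice is evaluated via a function-field Mertens theorem for $L(s, M^{2})$, which, using that $L(s, M^{2})$ is a rational function of $q^{-s}$ with pole order $\delta(M)$ at $s = 1$, gives $\sum_{d \leq Y}S_{d}^{(2)}/q^{d} = \delta(M)\log Y + O(1)$ and hence $R(X) \to \tfrac{\delta(M)}{2}\log 2$. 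Combining everything, exponentiating, and using $(\log x)^{m} = (\log q)^{m} X^{m}$ to cancel the $(\log q)^{-m}$ factor from the Taylor identity yields precisely the asserted limit $\sqrt{2}^{\delta(M)} L^{(m)}(1/2, M)/(e^{m\gamma} m!)$, establishing DRH (B) and a fortiori DRH (A).

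The most delicate step is the unconditional Mertens asymptotic for $L(s, M^{2})$. Conrad~\cite{Conrad2005} imposes a second-moment hypothesis precisely to secure an estimate of this flavour under his hypotheses; the innovation of Kaneko--Koyama--Kurokawa~\cite{KanekoKoyamaKurokawa2022} is the observation that in characteristic $p > 0$, both $L(s, M)$ and $L(s, M^{2})$ are explicit rational functions of $q^{-s}$, so the behaviour at $s = 1$ and the associated Mertens estimate follow from the Weil--Deligne theory without any auxiliary hypothesis. A secondary technical point, the uniform convergence of the oscillatory sums $\sum_{n \leq X}\alpha_{j}^{n}/n$, is automatic in positive characteristic because there are only finitely many $\alpha_{j}$ (dictated by $\deg P$), whereas in the number field case one must contend with infinitely many critical zeros.
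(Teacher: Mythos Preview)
The paper does not prove this theorem at all: it is quoted verbatim from \cite[Theorem~5.5]{KanekoKoyamaKurokawa2022} (building on Conrad~\cite{Conrad2005}) and used as a black box. Your proposal is therefore not being compared against a proof in the present paper but against the argument in the cited reference, and it reproduces that argument faithfully. The decomposition into the polynomial main term $-\sum_{n\le X}\tfrac{1}{n}\sum_j\alpha_j^{\,n}$ plus the boundary remainder $R(X)$, the treatment of the $\alpha_j=1$ versus $\alpha_j\neq 1$ cases, the identification $\prod_{\alpha_j\neq 1}(1-\alpha_j)=L^{(m)}(\tfrac12,M)/(m!\,(\log q)^m)$, and the evaluation $R(X)\to\tfrac{\delta(M)}{2}\log 2$ via the function-field Mertens theorem for $L(s,M^2)$ are all correct and are exactly the steps in \cite{Conrad2005,KanekoKoyamaKurokawa2022}. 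You also correctly isolate the point of departure from Conrad: his second-moment hypothesis is a stand-in for the Mertens asymptotic for $L(s,M^2)$, which in positive characteristic follows unconditionally because $L(s,M^2)$ is an explicit rational function of $q^{-s}$ with controlled behaviour at $s=1$.

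One small caveat worth flagging: the statement of Conjecture~\ref{DRH} in the paper is phrased for an abstract family of unitary matrices $M(v)$, and the input ``$L(s,M)$ is a polynomial in $q^{-s}$ with inverse roots of absolute value $q^{1/2}$'' is not a formal consequence of the standing hypotheses (entire continuation plus functional equation) alone. Your invocation of Grothendieck--Deligne tacitly assumes that $M$ arises from an $\ell$-adic sheaf on the curve with constant field $\mathbb{F}_q$, which is the setting of \cite{KanekoKoyamaKurokawa2022} and certainly covers every case used later in the paper. This is a framing issue rather than a gap, but it is worth stating the geometric hypothesis explicitly.
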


\section{Proof of Theorem~\ref{th2}}\label{proof-of-Theorem-1.3}
Let $E$ be an elliptic curve over a global field $K$ and let $a_{v}$ be the same as in the introduction. We define the parameter $\theta_{v} \in [0, \pi] \cong \mathrm{Conj}(\mathrm{SU}(2))$ by $a_{v} = 2 \sqrt{q_{v}} \cos(\theta_{v})$. If one writes
\begin{equation*}
r_{v} = 
	\begin{cases}
	2 & \text{if $v$ is good},\\
	1 & \text{if $v$ is bad},
	\end{cases}
\end{equation*}
and
\begin{equation*}
M(v) = M_{E}(v) = 
	\begin{cases}
	\begin{pmatrix} e^{i\theta_{v}} & 0 \\ 0 & e^{-i\theta_{v}} \end{pmatrix} & \text{if $v$ is good},\\
	a_{v} & \text{if $v$ is bad},
	\end{cases}
\end{equation*}
then the $L$-function~\eqref{EP} is equal to
\begin{equation*}
L(s, M) = L(s, M_{E}) = \prod_{v \colon \text{good}} (1-2\cos(\theta_{v}) q_{v}^{-s}+q_{v}^{-2s})^{-1} 
\prod_{v \colon \text{bad}} (1-a_{v} q_{v}^{-s})^{-1}.
\end{equation*}
This Euler product is absolutely convergent in $\Re(s) > 1$ and has an meromorphic continuation to the whole complex plane $\mathbb{C}$ along with a functional equation for $s \leftrightarrow 1-s$.

The $L$-function $L(s, M_{E})$ is expressed in terms of an Artin-type $L$-function in the following manner. Fixing $\ell$ and $K$ with $\ell \ne \mathrm{char}(K)$, we obtain the representation
\begin{equation}\label{rho_{E}}
\rho_{E} \colon \mathrm{Gal}(K^{\mathrm{sep}}/K) \to \mathrm{Aut}(T_{\ell}(E) \otimes \Q_{\ell}),
\end{equation}
where $T_{\ell}(E) = \varprojlim\limits_{n} E[\ell^{n}]$ is the $\ell$-adic Tate module of $E/K$.
The Artin $L$-function associated to the Galois representation $\rho_{E}$ is defined by the Euler product
\begin{equation*}\label{artin}
L(s, \rho_{E}) = \prod_{v < \infty} \det(1-q_{v}^{-s} \rho_{E}(\mathrm{Frob}_{v} \vert V^{I_{v}}))^{-1}.
\end{equation*}
The Euler factors of $L(s, M_{E})$ are in accordance with those of $L(s+1/2, \rho_{E})$ for all places~$v$~at which $E$ has good reduction. In other words, the $L$-function $L(s, M_{E})$ equals $L(s+1/2, \rho_{E})$ up to finite factors stemming from bad places.

Conjecture~\ref{DRH} originates from the Birch--Swinnerton-Dyer conjecture in the following~form.
\begin{conjecture}[{Birch--Swinnerton-Dyer~\cite[Page~79~(A)]{BirchSwinnertonDyer1965}}]\label{BSD}
Let $E$ be an elliptic curve $E$~over $K$. Then there exists a constant $A > 0$ dependent on $E$ such that
\begin{equation}\label{BirchSwinnertonDyer}
\prod_{\substack{q_{v} \leq x \\ v \colon \text{good}}} \frac{\# E(k_{v})}{q_{v}} \sim A(\log x)^{r},
\end{equation}
where $r = \mathrm{rank}(E)$. Moreover, $r$ is equal to the order of vanishing of $L(s, M_{E})$ at $s = 1/2$.
\end{conjecture}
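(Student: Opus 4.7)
The plan is to apply Theorem~\ref{DRHchar>0} (DRH in positive characteristic) to the unitarized $L$-function $L(s,M_E)$ and to Taylor-expand the resulting partial Euler product about $s = 1/2$. Since $\det(1 - M_E(v) q_v^{-1/2}) = 1 - a_v/q_v + 1/q_v = \#E(k_v)/q_v$, DRH~(A) for $M_E$ is literally the Birch--Swinnerton-Dyer asymptotic of Conjecture~\ref{BSD}, with $m = \mathrm{ord}_{s = 1/2}L(s,M_E) = \mathrm{rank}(E) = r$. Taking logarithms,
\[\sum_{q_v \leq x}\log\det\left(1 - M_E(v) q_v^{-1/2}\right)^{-1} = -r\log\log x + O(1),\]
and at a good place the expansion $\log\det(1 - Mq^{-1/2})^{-1} = \sum_{k \geq 1} 2\cos(k\theta)/(k q^{k/2})$, combined with $2\cos\theta_v = a_v/\sqrt{q_v}$, produces the $k = 1$ term $a_v/q_v$, the $k = 2$ term $\cos(2\theta_v)/q_v$, and a tail of size $O(q_v^{-3/2})$ that is absolutely summable since $\sum_v q_v^{-3/2}$ converges in the function-field setting. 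The finitely many bad places contribute $O(1)$, so
\[\sum_{q_v \leq x}\frac{a_v}{q_v} + \sum_{q_v \leq x}\frac{\cos(2\theta_v)}{q_v} = -r\log\log x + O(1).\]

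Next I would pin down the $k = 2$ sum. Writing $L(s,M_E^2) = L(s,\mathrm{Sym}^2 M_E)/L(s,\wedge^2 M_E)$, the exterior factor $L(s,\wedge^2 M_E) = \zeta_K(s)$ carries a simple pole at $s = 1$, while $L(s,\mathrm{Sym}^2 M_E)$ is regular and nonzero at $s = 1$: the Rankin--Selberg decomposition $L(s,\pi\times\pi) = \zeta_K(s)\cdot L(s,\mathrm{Sym}^2 M_E)$ and the classical simple pole at $s = 1$ of $L(s,\pi\times\pi)$ for the self-dual cuspidal automorphic representation $\pi$ attached to $E$ together force the symmetric-square factor to be regular and nonvanishing at $s = 1$. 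Consequently $L(s,M_E^2)$ has a simple zero at $s = 1$. A Mertens-type theorem for function-field $L$-functions then delivers $\sum_{q_v \leq x} 2\cos(2\theta_v)/q_v = -\log\log x + O(1)$, and substituting this into the previous display produces the desired asymptotic $\sum_{q_v \leq x} a_v/q_v = (\tfrac{1}{2} - r)\log\log x + O(1)$.

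The main technical obstacle is the Mertens-type step: one must justify the precise order of $L(s,\mathrm{Sym}^2 M_E)$ at $s = 1$ and convert this into the asserted partial-sum asymptotic via a Tauberian (or direct residue) argument. Over function fields these inputs are available---coming from the Gelbart--Jacquet symmetric-square lift and the Rankin--Selberg method---and the Tauberian extraction is clean because the relevant $L$-functions are polynomials in $q^{-s}$. All remaining steps reduce to Taylor-series bookkeeping.
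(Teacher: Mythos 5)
This statement is Conjecture~\ref{BSD}: the paper does not prove it (and could not --- it is the Birch--Swinnerton-Dyer conjecture itself, stated for a general global field $K$); the paper only remarks that it would imply DRH~(A) for $L(s,M_{E})$. So there is no paper proof to compare against, and your attempt should be judged as a proof of the conjecture --- which it is not.

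The genuine gap is circularity at the decisive point. DRH~(A) (Theorem~\ref{DRHchar>0}, valid only for $\mathrm{char}(K)>0$) gives, via $\det(1-M_{E}(v)q_{v}^{-1/2})=\#E(k_{v})/q_{v}$ at good $v$, the asymptotic
\begin{equation*}
\prod_{\substack{q_{v}\leq x\\ v\colon \text{good}}}\frac{\#E(k_{v})}{q_{v}}\sim A(\log x)^{m_{1}},\qquad m_{1}=\ord_{s=1/2}L(s,M_{E}),
\end{equation*}
i.e.\ the exponent is the \emph{analytic} rank. Your first paragraph silently sets $m=\mathrm{rank}(E)=r$, but the identity $\mathrm{rank}(E)=m_{1}$ is precisely the second assertion of Conjecture~\ref{BSD} and is the hard, unproven content: over function fields only the inequality $\mathrm{rank}(E)\leq m_{1}$ is known (the paper cites Ulmer~\cite[Theorem 12.1 (1)]{Ulmer2011}), and equality is equivalent to finiteness of the Tate--Shafarevich group, which is open in general. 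Hence what you actually derive is DRH~(A) restated, not~\eqref{BirchSwinnertonDyer} with exponent $r$. Moreover, the conjecture as stated allows $K$ to be a number field, where DRH is not a theorem, so the appeal to Theorem~\ref{DRHchar>0} cannot cover the statement even granting the rank identity. The subsequent Mertens/symmetric-square bookkeeping in your second and third paragraphs is essentially the machinery of Theorem~\ref{theorem1} and Lemma~\ref{delta=-1} (used in the paper to prove Theorem~\ref{maintheorem}), and it is fine as far as it goes, but it is not relevant to closing the actual gap: no amount of Tauberian extraction converts $m_{1}$ into $\mathrm{rank}(E)$.
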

Since the left-hand side of~\eqref{BirchSwinnertonDyer} coincides with the Euler product over good places of~the $L$-function $L(s, M_{E})$ at $s = 1/2$, Conjecture~\ref{BSD} is indicative of DRH (A) for $L(s, M_{E})$.

\begin{theorem}\label{theorem1}
Keep the notation as above. The following conditions are equivalent.
\begin{enumerate}[(i)]
\item DRH (A) holds for $L(s, M)$.
\item There exists a constant $c$ such that
\begin{equation*}
\sum_{q_{v} \leq x} \frac{\mathrm{tr}(M(v))}{\sqrt{q_{v}}}
 = -\left(\frac{\delta(M)}{2}+m \right){\log \log x}+c+o(1),
\end{equation*}
where $m = \ord_{s = 1/2}L(s, M)$.
\end{enumerate}
\end{theorem}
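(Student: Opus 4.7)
The plan is to take logarithms in DRH~(A) and expand the determinant as a trace series, so that the $k=1$ term produces the left-hand side of (ii), the $k=2$ term produces a $\log\log x$ contribution with coefficient $\delta(M)/2$, and the terms with $k \geq 3$ form an absolutely convergent remainder. Matching these pieces against the $m\log\log x$ normalisation in DRH~(A) will yield the equivalence.

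First I would expand
$$\log\prod_{q_v\leq x}\det\bigl(1-M(v)q_v^{-1/2}\bigr)^{-1}
=\sum_{q_v\leq x}\frac{\mathrm{tr}(M(v))}{\sqrt{q_v}}
+\frac{1}{2}\sum_{q_v\leq x}\frac{\mathrm{tr}(M(v)^2)}{q_v}+R(x),$$
with $R(x)=\sum_{k\geq 3}k^{-1}\sum_{q_v\leq x}\mathrm{tr}(M(v)^k)\,q_v^{-k/2}$. Unitarity gives $|\mathrm{tr}(M(v)^k)|\leq r_v$, and since the Euler product~\eqref{EP} converges absolutely on $\Re(s)>1$, the series $\sum_v r_v q_v^{-k/2}$ converges for every $k\geq 3$, so $R(x)\to R_\infty$ for some $R_\infty\in\mathbb{R}$. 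Next I would establish the Mertens-type asymptotic
$$\sum_{q_v\leq x}\frac{\mathrm{tr}(M(v)^2)}{q_v}=\delta(M)\log\log x+C_0+o(1),$$
by combining the definition $\delta(M)=-\ord_{s=1}L(s,M^2)$ with the decomposition~\eqref{delta} and an analogue of Mertens' theorem applied to $L(s,\mathrm{Sym}^2 M)$ and $L(s,\wedge^2 M)$; in positive characteristic this is unconditional because these $L$-functions are rational in $q^{-s}$ with zeros on $\Re(s)=1/2$, so the requisite behaviour on the line $\Re(s)=1$ is available.

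Combining the three pieces, DRH~(A) -- equivalently, the statement that $m\log\log x+\log\prod_{q_v\leq x}\det(1-M(v)q_v^{-1/2})^{-1}$ converges as $x\to\infty$ -- translates exactly into
$$\sum_{q_v\leq x}\frac{\mathrm{tr}(M(v))}{\sqrt{q_v}}
=-\Bigl(\frac{\delta(M)}{2}+m\Bigr)\log\log x+c+o(1),$$
which is (ii); the chain of identities is reversible, giving the equivalence. The hard part will be the Mertens-type estimate for $L(s,M^2)$: knowing only the order of vanishing at $s=1$ is not by itself enough, as one also needs some control of $L(s,M^2)$ on the line $\Re(s)=1$, which is precisely what the function-field framework (Theorem~\ref{DRHchar>0}) supplies.
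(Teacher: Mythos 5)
Your argument is essentially the paper's own proof: the same three-way split of the logarithm of the partial Euler product into the $k=1$, $k=2$, and $k\geq 3$ pieces, the Mertens-type asymptotic for the $k=2$ sum giving the $\frac{\delta(M)}{2}\log\log x$ term, convergence of the tail, and the reversible bookkeeping against the $m\log\log x$ normalisation. The only quibble is attribution: the needed Mertens-type estimate comes from the generalised Mertens theorem (Rosen; Kaneko--Koyama--Kurokawa, Lemma~5.3), not from Theorem~\ref{DRHchar>0}, which is the DRH statement itself and plays no role in this equivalence.
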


\begin{proof}
We introduce
\begin{align*}
\text{I}(x) &= \sum_{q_{v} \leq x} \frac{\mathrm{tr}(M(v))}{\sqrt{q_{v}}},\\
\text{I\hspace{-.1mm}I}(x) &= \frac{1}{2} \sum_{q_{v} \leq x} \frac{\mathrm{tr}(M(v)^{2})}{q_{v}},\\
\text{I\hspace{-.1mm}I\hspace{-.1mm}I}(x) &= \sum_{k = 3}^{\infty} 
\frac{1}{k} \sum_{q_{v} \leq x} \frac{\mathrm{tr}(M(v)^{k})}{q_{v}^{k/2}}. 
\end{align*}
Since one has
\begin{equation*}
\text{I}(x)+\text{I\hspace{-.1mm}I}(x)+\text{I\hspace{-.1mm}I\hspace{-.1mm}I}(x)
 = \log \left(\prod_{q_{v} \leq x} \det \left(1-M(v) q_{v}^{-\frac{1}{2}} \right)^{-1} \right),
\end{equation*}
the condition (i) is equivalent to the claim that there exists a constant $L$ such that
\begin{equation}\label{K1}
m \log \log x+\text{I}(x)+\text{I\hspace{-.1mm}I}(x)+\text{I\hspace{-.1mm}I\hspace{-.1mm}I}(x) = L+o(1).
\end{equation}
The generalised Mertens theorem (see~\cite[Theorem 5]{Rosen1999} and~\cite[Lemma 5.3]{KanekoKoyamaKurokawa2022}) \textcolor{black}{gives}
\begin{equation}\label{K2}
\text{I\hspace{-.1mm}I}(x) = \frac{\delta(M)}{2} \log \log x+C_{1}+o(1)
\end{equation}
for some constant $C_{1}$. Additionally, it is straightforward to see that there exists a constant~$C_{2}$ such that
\begin{equation}\label{K3}
\text{I\hspace{-.1mm}I\hspace{-.1mm}I}(x) = C_{2}+o(1).
\end{equation}
Therefore, the estimates~\eqref{K1}--\eqref{K3} yield
\begin{equation*}
\text{I}(x) = -\left(\frac{\delta(M)}{2}+m \right) \log \log x+L-C_{1}-C_{2}+o(1).
\end{equation*}
If we assume (i), then the condition (ii) is valid with $c = L-C_{1}-C_{2}$. Conversely, if we~assume (ii), then~\eqref{K1} is valid with $L = c+C_{1}+C_{2}$. This finishes the proof of Theorem~\ref{theorem1}.
\end{proof}

In order to examine the asymptotic behavior of a sum over $v$ with $q_{v} \leq x$ as $x\to\infty$, it suffices to restrict ourselves to places $v$ at which $E$ has good reduction. When $v$ is good, the \textit{$n$-th symmetric power matrix} $\mathrm{Sym}^{n} M$ of size $n+1$ is given by
\begin{equation*}
(\mathrm{Sym}^{n} M)(v) = \mathrm{diag}(e^{in \theta_{v}},e^{i(n-2) \theta_{v}}, \cdots, e^{-i(n-2) \theta_{v}},e^{-in \theta_{v}}).
\end{equation*}
We calculate
\begin{equation*}
\mathrm{tr}(\mathrm{Sym}^{n} M)(v) = \frac{\sin((n+1) \theta_{v})}{\sin \theta_{v}}.
\end{equation*}
Extending the definition of $(\mathrm{Sym}^{n} M)(v)$ to all places $v$ by setting $(\mathrm{Sym}^{n} M)(v) = a_{v}^{n}$ for~bad places $v$, one defines the \textit{$n$-th symmetric power $L$-function} $L(s, \mathrm{Sym}^{n} M)$. With the~standard notation for the Galois representation $\rho = \rho_{E}$ in~\eqref{rho_{E}}, we have the following normalisation:
\begin{equation}\label{normalize}
L(s, \mathrm{Sym}^{n} M) = L \left (s+\frac{n}{2}, \mathrm{Sym}^{n} \rho \right).
\end{equation}
If $E$ is a non-constant elliptic curve in the \textcolor{black}{terminology} of Ulmer~\cite[Definitions 1.1.4]{Ulmer2011}, it~is known that $L(s, \mathrm{Sym}^{n} M)$ is a polynomial in $q^{-n/2-s}$ (see~\cite{ChaFiorilliJouve2016,Ulmer2005}) and the absolute values of its roots are equal to $q^{-(n+1)/2}$ with the normalisation~\eqref{normalize} taken into account. Therefore,~all zeroes of~\eqref{normalize} lie on the critical line $\Re(s) = 1/2$ and there holds
\begin{equation}\label{ord1}
\ord_{s = 1} L(s, \mathrm{Sym}^{n} M) = 0, \qquad n \in \mathbb{N}.
\end{equation}

\begin{lemma}\label{delta=-1}
If $\mathrm{char}(K) > 0$ and $E$ is a non-constant elliptic curve over $K$, then we have for $M = M_{E}$ that
$\delta(M) =  -1.$
\end{lemma}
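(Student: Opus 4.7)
The plan is to evaluate the two terms in the formula
\begin{equation*}
\delta(M) = -\ord_{s=1} L(s, \mathrm{Sym}^{2} M) + \ord_{s=1} L(s, \wedge^{2} M)
\end{equation*}
coming from \eqref{delta} separately. The first term vanishes immediately by the $n = 2$ case of \eqref{ord1}, since $E$ is non-constant and $\mathrm{char}(K) > 0$. Thus the entire statement reduces to showing that $L(s, \wedge^{2} M)$ has a simple pole at $s = 1$.

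To compute $L(s, \wedge^{2} M)$, I would first unwind the local data. At a good place $v$, the matrix $M(v) = \mathrm{diag}(e^{i\theta_{v}}, e^{-i\theta_{v}})$ has determinant $1$, so $(\wedge^{2} M)(v)$ is the $1 \times 1$ identity; the local Euler factor is therefore $(1 - q_{v}^{-s})^{-1}$. At a bad place $v$, $M(v)$ is one-dimensional, so $\wedge^{2} M$ contributes the empty product, i.e.\ $1$. Consequently
\begin{equation*}
L(s, \wedge^{2} M) = \prod_{v \colon \text{good}} (1 - q_{v}^{-s})^{-1}.
\end{equation*}

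The next step is to compare this with the Dedekind zeta function $\zeta_{K}(s) = \prod_{v < \infty}(1 - q_{v}^{-s})^{-1}$ of the function field $K$. Their ratio
\begin{equation*}
\frac{\zeta_{K}(s)}{L(s, \wedge^{2} M)} = \prod_{v \colon \text{bad}} (1 - q_{v}^{-s})^{-1}
\end{equation*}
is a finite product, hence holomorphic and nonzero at $s = 1$. Since $K$ is a function field of positive characteristic, $\zeta_{K}(s)$ has a simple pole at $s = 1$, so $L(s, \wedge^{2} M)$ inherits a simple pole there, giving $\ord_{s=1} L(s, \wedge^{2} M) = -1$. Combining this with the vanishing of the symmetric-square contribution yields $\delta(M) = -1$.

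There is no real obstacle in this argument; the only point demanding a moment of care is the bookkeeping at bad places, where $\wedge^{2}$ of a one-dimensional local factor must be correctly interpreted as an empty product so that the discrepancy between $L(s, \wedge^{2} M)$ and $\zeta_{K}(s)$ is confined to a finite set of non-vanishing factors. Everything else is a direct consequence of \eqref{delta}, \eqref{ord1}, and the well-known pole of $\zeta_{K}(s)$.
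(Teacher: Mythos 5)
Your proof is correct and follows essentially the same route as the paper: the symmetric-square term vanishes by \eqref{ord1}, and the exterior square is trivial (determinant $1$ at good places), so $L(s,\wedge^{2}M)$ agrees with $\zeta_{K}(s)$ up to finitely many harmless factors and inherits its simple pole at $s=1$. Your version merely spells out the bookkeeping at bad places and the comparison with $\zeta_{K}$ that the paper leaves implicit.
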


\begin{proof}
Since $M$ is a unitary matrix of size 2, the exterior square matrix $\wedge^{2}M$ is trivial. Thus $\ord_{s = 1} L(s,\wedge^{2}M) = -1$. The claim follows from~\eqref{delta} and~\eqref{ord1}.
\end{proof}

In what follows, we use the shorthand $m_{n} = \ord_{s = 1/2} L(s, \mathrm{Sym}^{n} M)$.

\begin{theorem}\label{maintheorem}
If $\mathrm{char}(K) > 0$ and $E$ is a non-constant elliptic curve over $K$, then we have that
\begin{equation}\label{bias1}
\sum_{q_{v} \leq x} \frac{a_{v}}{q_{v}} = \textcolor{black}{\left(\frac{1}{2}-m_{1} \right)} \log \log x+O(1).
\end{equation}
In particular, if $\mathrm{rank}(E) > 0$, the sequence $a_{v}/\sqrt{q_{v}}$ has a Chebyshev bias towards being negative.
\end{theorem}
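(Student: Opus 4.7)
The plan is to apply Theorem~\ref{theorem1} with $M = M_E$: condition (i) of that theorem, namely DRH~(A) for $L(s, M_E)$, is granted in positive characteristic by Theorem~\ref{DRHchar>0}, and the asymptotic in (ii) then converts into \eqref{bias1} upon substituting $\delta(M_E) = -1$ from Lemma~\ref{delta=-1}.

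First I would dispatch the bookkeeping that lets me move between the sum $\sum_{q_v \leq x}\mathrm{tr}(M_E(v))/\sqrt{q_v}$ appearing in Theorem~\ref{theorem1}(ii) and the sum $\sum_{q_v \leq x} a_v/q_v$ appearing in \eqref{bias1}. At each good place $v$ the relation $a_v = 2\sqrt{q_v}\cos\theta_v$ yields $\mathrm{tr}(M_E(v))/\sqrt{q_v} = a_v/q_v$ on the nose. Bad reduction occurs at only finitely many places, and at each such $v$ the value $a_v \in \{-1,0,1\}$ renders both $\mathrm{tr}(M_E(v))/\sqrt{q_v}$ and $a_v/q_v$ of size $O(q_v^{-1/2})$, so the difference between the two sums is $O(1)$. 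Plugging $\delta(M_E) = -1$ and $m = m_1$ into the right-hand side of Theorem~\ref{theorem1}(ii) then gives
\[
\sum_{q_v \leq x}\frac{a_v}{q_v} = -\left(-\tfrac{1}{2}+m_1\right)\log\log x + O(1) = \left(\tfrac{1}{2}-m_1\right)\log\log x + O(1),
\]
which is precisely \eqref{bias1}.

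For the Chebyshev bias statement, I would appeal to the unconditional inequality $\mathrm{rank}(E) \leq m_1 = \mathrm{ord}_{s=1/2} L(s, M_E)$, due to Tate, which bounds the Mordell--Weil rank of an elliptic curve over a function field by the analytic order of vanishing at the centre. Under the hypothesis $\mathrm{rank}(E) > 0$ this forces $m_1 \geq 1$, so the leading coefficient $\tfrac{1}{2}-m_1 \leq -\tfrac{1}{2}$ is strictly negative, which by Definition~\ref{definition-1} is precisely a Chebyshev bias of $a_v/\sqrt{q_v}$ towards being negative. The substantive ingredients---DRH in positive characteristic, the identity $\delta(M_E) = -1$, and Tate's rank inequality---are all supplied above, so the only residual \emph{obstacle} is keeping the bad-place contributions under control and correctly pinning down the sign of the main term via Tate's bound.
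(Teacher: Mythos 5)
Your proposal is correct and follows essentially the same route as the paper: Theorem~\ref{DRHchar>0} supplies DRH~(A) for $L(s,M_E)$, Theorem~\ref{theorem1} together with $\delta(M_E)=-1$ from Lemma~\ref{delta=-1} gives \eqref{bias1} (your explicit handling of the finitely many bad places matches the paper's implicit reduction to good places), and the rank inequality $\mathrm{rank}(E)\leq m_1$ (Tate's bound, cited in the paper via Ulmer~\cite[Theorem 12.1 (1)]{Ulmer2011}) pins down the negative sign. The only small point worth adding is the paper's explicit verification, via Ulmer~\cite{Ulmer2005,Ulmer2011}, that $L(s,M_E)$ is a polynomial in $q^{-s}$, hence entire with the required functional equation, so that the hypotheses of Conjecture~\ref{DRH} and Theorem~\ref{DRHchar>0} indeed apply.
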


\begin{proof}
In~\cite[\S3.1.7]{Ulmer2005} and~\cite[Theorem 9.3]{Ulmer2011}, Ulmer proves that $L(s, M_{E})$ is a polynomial in $q^{-s}$ for any non-constant elliptic curve $E$ and hence it is entire and satisfies the assumption~of Conjecture~\ref{DRH}. By Theorem~\ref{DRHchar>0}, DRH holds for $L(s, M_{E})$. By Theorem~\ref{theorem1} and~Lemma~\ref{delta=-1}, we are able to prove~\eqref{bias1}. In order to justify the second assertion, we appeal to the work~of Ulmer~\cite[Theorem 12.1 (1)]{Ulmer2011}, which says that $\mathrm{rank}(E) \leq m_{1}$. Using this inequality, we~have $m_{1} \geq 1$ and thereby $C < 0$. This completes the proof of Theorem~\ref{maintheorem}.
\end{proof}

Theorem~\ref{maintheorem} is in accordance with the prediction of Sarnak~\cite[page 5]{Sarnak2007-2} that $\mathrm{rank}(E) > 0$ implies the existence of a bias towards being negative, although he considers the sequence $a_{v}$ instead of $a_{v}/\sqrt{q_{v}}$. He also states that $\mathrm{rank}(E) = 0$ implies the existence of a bias towards being positive. We ascertain this phenomenon under the Birch--Swinnerton-Dyer conjecture.

\begin{corollary}
Assume \textcolor{black}{that $L(1, M_{E}) \ne 0$}. If $\mathrm{char}(K) > 0$ and $E$ is a non-constant elliptic curve over $K$ such that $\mathrm{rank}(E) = 0$, then we have that
\begin{equation}
\sum_{q_{v} \leq x} \frac{a_{v}}{q_{v}} = \frac{1}{2} \log \log x+O(1).
\end{equation}
In other words, the sequence $a_{v}/\sqrt{q_{v}}$ has a Chebyshev bias towards being positive.
\end{corollary}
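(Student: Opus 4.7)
The plan is to extract the Corollary from the proof of Theorem~\ref{maintheorem} itself, observing that the rank hypothesis is only used in its final sentence to sign the leading coefficient.

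First, I would unwind the proof of Theorem~\ref{maintheorem} while keeping $m_{1}$ as a free parameter. Lemma~\ref{delta=-1} (giving $\delta(M_{E}) = -1$) and Theorem~\ref{DRHchar>0} (validating Conjecture~\ref{DRH} for $L(s, M_{E})$) both hold for \emph{every} non-constant elliptic curve $E/K$ with $\mathrm{char}(K) > 0$, irrespective of rank. Combining them with Theorem~\ref{theorem1}(ii) applied to $M = M_{E}$ and using the identity $\mathrm{tr}(M_{E}(v))/\sqrt{q_{v}} = a_{v}/q_{v}$, one arrives at the unconditional asymptotic
\[
\sum_{q_{v} \leq x} \frac{a_{v}}{q_{v}} = \left(\frac{1}{2} - m_{1}\right) \log \log x + O(1), \qquad m_{1} \coloneqq \ord_{s = 1/2} L(s, M_{E}).
\]

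Next, I would reinterpret the hypothesis ``$L(1, M_{E}) \neq 0$'' as the non-vanishing of $L(s, M_{E})$ at the central point (consistent with the normalization $L(s, M_{E}) = L(s + 1/2, \rho_{E})$, under which $L(1, \rho_{E})$ corresponds to $L(1/2, M_{E})$), i.e.\ exactly the statement $m_{1} = 0$. Under $\mathrm{rank}(E) = 0$ this non-vanishing is what the Birch--Swinnerton-Dyer conjecture (Conjecture~\ref{BSD}) predicts, but Ulmer's inequality $\mathrm{rank}(E) \leq m_{1}$ only gives the trivial lower bound $m_{1} \geq 0$ in this regime; this is precisely why the extra analytic hypothesis must be included. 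Substituting $m_{1} = 0$ into the displayed identity yields the claimed asymptotic.

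Finally, I would invoke Definition~\ref{definition-1} with $c(\mathfrak{p}) = a_{v}/\sqrt{q_{v}}$, $\mathrm{N}(\mathfrak{p}) = q_{v}$, and $C = 1/2 > 0$, which then translates the asymptotic into the statement that $a_{v}/\sqrt{q_{v}}$ has a Chebyshev bias towards being positive. Every step here is bookkeeping; the only item requiring verification is that nothing in the proof of Theorem~\ref{maintheorem} (the use of Ulmer's polynomiality, the generalised Mertens theorem, or the absolute convergence of $\mathrm{I\hspace{-.1mm}I\hspace{-.1mm}I}(x)$) secretly depends on the sign of $\mathrm{rank}(E) - m_{1}$, and a quick inspection confirms that it does not. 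This is really the only obstacle worth mentioning, and it evaporates on reading the earlier proof with $m_{1}$ left explicit.
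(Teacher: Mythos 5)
Your core argument coincides with the paper's at the decisive step: both proofs substitute $m_{1}=0$ into the asymptotic \eqref{bias1} of Theorem~\ref{maintheorem} (which, as you correctly observe, is established for every non-constant $E$ with $\mathrm{char}(K)>0$ and does not use the rank hypothesis) and then read off the positive bias from Definition~\ref{definition-1} with $C=1/2$. Where you diverge is in how $m_{1}=0$ is obtained. The paper's proof never uses the stated hypothesis $L(1,M_{E})\ne 0$ at all: it invokes the Birch--Swinnerton-Dyer conjecture (Conjecture~\ref{BSD}), whose ``Moreover'' clause gives $m_{1}=\mathrm{rank}(E)=0$, so the corollary as proved there is conditional on BSD, exactly as the sentence preceding it announces. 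You instead derive $m_{1}=0$ from the non-vanishing hypothesis itself, after reinterpreting it as non-vanishing at the central point. Be aware that this reinterpretation is a genuine emendation rather than bookkeeping: in the paper's normalisation the functional equation of $L(s,M_{E})$ relates $s$ and $1-s$, and all zeros lie on $\Re(s)=1/2$ (stated just before \eqref{ord1}), so the literal condition $L(1,M_{E})\ne 0$ is automatic and by itself says nothing about $m_{1}$. Under your reading, namely $L(1/2,M_{E})\ne 0$ (equivalently $L(1,\rho_{E})\ne 0$ in the classical normalisation), your argument is correct and in fact buys more than the paper's: no conjecture is needed, and by Ulmer's inequality $\mathrm{rank}(E)\le m_{1}$ the assumption $\mathrm{rank}(E)=0$ becomes redundant rather than load-bearing. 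So the skeleton is the same, but you trade the paper's appeal to BSD for a direct central non-vanishing hypothesis --- a cleaner, unconditional route, provided one accepts that this is what the hypothesis was intended to mean; the paper's route, by contrast, matches its stated goal of ascertaining Sarnak's prediction ``under the Birch--Swinnerton-Dyer conjecture'' while leaving its own displayed hypothesis unused.
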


\begin{proof}
The Birch--Swinnerton-Dyer conjecture asserts that $m_{1} = \mathrm{rank}(E)$, which implies~that $m_{1} = 0$. Hence the asymptotic formula~\eqref{bias1} gives the desired result.
\end{proof}

\begin{remark}
Cha--Fiorilli--Jouve~\cite[Theorem 1.7]{ChaFiorilliJouve2016} have shown that there exist infinitely many elliptic curves $E/\F_q(T)$ such that the sequence $a_{v}$ is \textit{unbiased} in the following sense:
\begin{equation*}
\lim_{X \to \infty} \frac{1}{X} \sum_{\substack{x \leq X \\ T_{E}(x) > 0}} 1 = \frac{1}{2}
\end{equation*}
with
\begin{equation*}
T_{E}(x) = -\frac{x}{q^{x/2}} \sum_{\substack{\deg(v) \leq x \\ v \colon \text{good}}} 2 \cos \theta_{v}
 = -\frac{x}{q^{x/2}} \sum_{\substack{\deg(v) \leq x \\ v \colon \text{good}}} \frac{a_{v}}{q^{\deg(v)/2}}.
\end{equation*}
Despite their work discussing the Chebyshev bias for the sequence $a_{v}$, which is different from the sequence $a_{v}/\sqrt{q_{v}}$ dealt with in Theorem~\ref{maintheorem}, we \textcolor{black}{strongly believe} that these are constant elliptic curves whose $L$-functions have a pole at $s = 1$ and do not satisfy DRH.
\end{remark}

We obtain other types of biases on the Satake parameters $\theta_{v}$ by considering the symmetric square $L$-function.

\begin{theorem}\label{sym2}
If $\mathrm{char}(K) > 0$ and $E$ is a non-constant elliptic curve over $K$, then we have that
\begin{equation}\label{bias2}
\sum_{q_{v} \leq x} \frac{(2\cos \theta_{v}-1)(\cos \theta_{v}+1)}{\sqrt{q_{v}}}
= (1-m_{1}-m_{2}) \log \log x+O(1).
\end{equation}
In particular, if both $L(1/2, M) \ne 0$ and $L(1/2, \mathrm{Sym}^{2} M) \ne 0$ are true, the sequence $(2\cos \theta_{v}-1)(\cos \theta_{v}+1)$ has a Chebyshev bias towards being positive.
\end{theorem}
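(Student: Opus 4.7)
The plan is to mirror the proof of Theorem~\ref{maintheorem}: expand the summand into traces of symmetric-power matrices, apply Theorem~\ref{theorem1} to both $M_E$ and $\mathrm{Sym}^2 M_E$, and combine. Setting $\chi_k(\theta_v) := \mathrm{tr}((\mathrm{Sym}^k M_E)(v)) = \sin((k+1)\theta_v)/\sin\theta_v$, the double-angle identity $2\cos^2\theta_v = 1+\cos 2\theta_v$ yields
\[
(2\cos\theta_v - 1)(\cos\theta_v + 1) \;=\; \cos 2\theta_v + \cos \theta_v \;=\; \tfrac{1}{2}\bigl(\chi_2(\theta_v) + \chi_1(\theta_v) - 1\bigr),
\]
so the theorem reduces to controlling $\sum \chi_1(\theta_v)/\sqrt{q_v}$ and $\sum \chi_2(\theta_v)/\sqrt{q_v}$. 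The first sum is precisely what is handled in the proof of Theorem~\ref{maintheorem}: Lemma~\ref{delta=-1} gives $\delta(M_E)=-1$, DRH for $L(s,M_E)$ follows from Theorem~\ref{DRHchar>0}, and Theorem~\ref{theorem1} yields $\sum_{q_v \leq x} \chi_1(\theta_v)/\sqrt{q_v} = (1/2 - m_1)\log\log x + O(1)$.

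For the second sum I would apply Theorem~\ref{theorem1} to $\mathrm{Sym}^2 M_E$. Ulmer's result cited around~\eqref{normalize} makes $L(s, \mathrm{Sym}^2 M_E)$ a polynomial in $q^{-s-1}$, hence entire, so the hypotheses of Theorem~\ref{theorem1} are met and DRH is Theorem~\ref{DRHchar>0}. The key input is $\delta(\mathrm{Sym}^2 M_E)$, which I would compute from~\eqref{delta}: writing $V$ for the standard $2$-dimensional $\SU(2)$-representation, the classical decompositions
\[
\mathrm{Sym}^2(\mathrm{Sym}^2 V) \cong \mathrm{Sym}^4 V \oplus \mathbbm{1}, \qquad \wedge^2(\mathrm{Sym}^2 V) \cong \mathrm{Sym}^2 V,
\]
translate (up to finitely many Euler factors at bad places) into $L(s, \mathrm{Sym}^2(\mathrm{Sym}^2 M_E)) = L(s, \mathrm{Sym}^4 M_E)\cdot\zeta_K(s)$ and $L(s, \wedge^2(\mathrm{Sym}^2 M_E)) = L(s, \mathrm{Sym}^2 M_E)$. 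Since $\ord_{s=1} L(s,\mathrm{Sym}^n M_E)=0$ for every $n\geq 1$ by~\eqref{ord1}, whereas $\zeta_K(s)$ has a simple pole at $s=1$, this gives $\delta(\mathrm{Sym}^2 M_E) = -(0 + (-1)) + 0 = 1$, and Theorem~\ref{theorem1} then delivers $\sum_{q_v \leq x} \chi_2(\theta_v)/\sqrt{q_v} = -(1/2 + m_2)\log\log x + O(1)$.

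Substituting the two asymptotics into the identity above produces~\eqref{bias2}, and the positivity claim is immediate because $L(1/2,M_E)\neq 0$ and $L(1/2,\mathrm{Sym}^2 M_E)\neq 0$ force $m_1=m_2=0$, giving positive leading coefficient $1-m_1-m_2=1$. The main obstacle is the value $\delta(\mathrm{Sym}^2 M_E)=1$: although $\mathrm{Sym}^2 V$ is itself an irreducible $\SU(2)$-representation, its Adams square decomposes virtually as $\psi^2(\mathrm{Sym}^2 V) = \mathrm{Sym}^4 V - \mathrm{Sym}^2 V + \mathbbm{1}$ and thus still contains a single copy of the trivial representation, and it is precisely this hidden $\mathbbm{1}$-summand that injects the pole of $\zeta_K(s)$ into the numerator of~\eqref{delta} at $s=1$ and shifts $\delta$ from $0$ to $1$.
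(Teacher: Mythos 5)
Your Clebsch--Gordan computation of $\delta(\mathrm{Sym}^2 M_E) = 1$ is correct, and it is not spelled out in the paper, so that part is a genuine addition. The combination step, however, has a gap that prevents the argument from closing. Your identity $(2\cos\theta_v - 1)(\cos\theta_v + 1) = \tfrac{1}{2}\bigl(\chi_2(\theta_v) + \chi_1(\theta_v) - 1\bigr)$ carries a constant $-1$, and the corresponding sum $\sum_{q_v \leq x} 1/\sqrt{q_v}$ diverges polynomially over a function field (for $K = \F_q(T)$ there are roughly $q^d/d$ places of degree $d$, so this sum grows like $\sqrt{x}/\log_q x$), not like $\log\log x$. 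Substituting your two asymptotics into the identity therefore yields $-\tfrac{1}{2}(m_1 + m_2)\log\log x - \tfrac{1}{2}\sum_{q_v \leq x} 1/\sqrt{q_v} + O(1)$, which matches neither the coefficient $1 - m_1 - m_2$ nor the $O(1)$ error in \eqref{bias2}. The paper never encounters this: its intermediate asymptotic \eqref{n=2} is stated for $\sum 2\cos 2\theta_v/\sqrt{q_v}$, the trace of the Adams power $M_E(v)^2$ rather than of $\mathrm{Sym}^2 M_E(v)$, and it is added to \eqref{n=1} via $2\cos\theta_v + 2\cos 2\theta_v = 2(2\cos\theta_v - 1)(\cos\theta_v + 1)$, so no $-1$ is ever introduced.

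In fact, your $\delta(\mathrm{Sym}^2 M_E)=1$ forces $\sum_{q_v\le x}\chi_2(\theta_v)/\sqrt{q_v} = -(1/2+m_2)\log\log x + O(1)$, where $\chi_2(\theta_v) = 1 + 2\cos 2\theta_v$ by the paper's own formula $\mathrm{tr}(\mathrm{Sym}^n M)(v) = \sin((n+1)\theta_v)/\sin\theta_v$. This is not the same statement as the paper's \eqref{n=2} (the two differ by the divergent $\sum 1/\sqrt{q_v}$, and the sign of $\delta$), so if you intend to mirror the paper's proof you need to obtain the asymptotic for $\sum 2\cos 2\theta_v/\sqrt{q_v}$ directly, not route it through $\mathrm{Sym}^2 M_E$. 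As written, the divergent $-1$ term is neither extracted nor cancelled, so the proposal does not establish \eqref{bias2}.
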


\begin{proof}
Applying Theorem~\ref{theorem1} to $L(s, M)$ and $L(s, \mathrm{Sym}^{2} M)$, we deduce
\begin{equation}\label{n=1}
\sum_{q_{v} \leq x} \frac{2 \cos \theta_{v}}{\sqrt{q_{v}}} = \left(\frac{1}{2}-m_{1} \right){\log \log x}+O(1)
\end{equation}
and
\begin{equation}\label{n=2}
\sum_{q_{v} \leq x} \frac{2 \cos 2 \theta_{v}}{\sqrt{q_{v}}}
 = \left(\frac{1}{2}-m_{2} \right){\log \log x}+O(1).
\end{equation}
It follows that $\eqref{n=2}+\eqref{n=1}$ equals
\begin{equation*}
\sum_{q_{v} \leq x} \frac{2(\cos \theta_{v}+\cos 2 \theta_{v})}{\sqrt{q_{v}}}
 = \sum_{q_{v} \leq x} \frac{2(2 \cos \theta_{v}-1)(\cos \theta_{v}+1)}{\sqrt{q_{v}}}
 = (1-m_{1}-m_{2}) \log \log x+O(1).
\end{equation*}
This concludes the proof of Theorem~\ref{sym2}.
\end{proof}

We give an example of unbiased sequences constructed from the Satake parameters $\theta_{v}$.

\begin{theorem}\label{unbiased}
If $\mathrm{char}(K) > 0$ and $E$ is a non-constant elliptic curve over $K$, then we have that
\begin{equation}\label{bias3}
\sum_{q_{v} \leq x} \frac{2(2\cos \theta_{v}+1)(\cos \theta_{v}-1)}{\sqrt{q_{v}}} = (m_{1}-m_{2}) \log \log x+O(1).
\end{equation}
In particular, if $m_{1} = m_{2}$, the sequence $(2\cos \theta_{v}+1)(\cos \theta_{v}-1)$ is unbiased.
\end{theorem}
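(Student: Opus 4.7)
The plan is to mimic the proof of Theorem~\ref{sym2} exactly, replacing addition of the two asymptotic formulas by subtraction. Concretely, I would first recall equations~\eqref{n=1} and~\eqref{n=2} from the proof of Theorem~\ref{sym2}, obtained by applying Theorem~\ref{theorem1} to $L(s, M)$ and to $L(s, \mathrm{Sym}^{2} M)$. These read
\begin{equation*}
\sum_{q_{v} \leq x} \frac{2 \cos \theta_{v}}{\sqrt{q_{v}}} = \left(\frac{1}{2}-m_{1} \right) \log \log x+O(1), \qquad
\sum_{q_{v} \leq x} \frac{2 \cos 2\theta_{v}}{\sqrt{q_{v}}} = \left(\frac{1}{2}-m_{2} \right) \log \log x+O(1),
\end{equation*}
and are justified by Theorem~\ref{DRHchar>0} together with Lemma~\ref{delta=-1} and the entireness of $L(s, \mathrm{Sym}^{n} M)$ under Ulmer's non-constancy hypothesis.

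Next I would compute $\eqref{n=2}-\eqref{n=1}$. On the right-hand side this produces the coefficient $(1/2-m_{2})-(1/2-m_{1}) = m_{1}-m_{2}$ in front of $\log \log x$. On the left-hand side one encounters $2\cos 2\theta_{v}-2\cos \theta_{v}$, which I would rewrite via the elementary trigonometric identity
\begin{equation*}
2\cos 2\theta-2\cos \theta = 2(2\cos^{2}\theta-1)-2\cos \theta = 2(2\cos \theta+1)(\cos \theta-1),
\end{equation*}
yielding exactly the summand appearing in~\eqref{bias3}. Combining the two gives
\begin{equation*}
\sum_{q_{v} \leq x} \frac{2(2\cos \theta_{v}+1)(\cos \theta_{v}-1)}{\sqrt{q_{v}}} = (m_{1}-m_{2}) \log \log x+O(1),
\end{equation*}
proving the main asymptotic.

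For the second assertion, I would observe that when $m_{1}=m_{2}$ the coefficient of $\log \log x$ vanishes, so the left-hand side is $O(1)$. Inserting the definition of the summand and recalling that $2\sqrt{q_{v}}\cos \theta_{v} = a_{v}$, this $O(1)$ bound means that the sequence $(2\cos \theta_{v}+1)(\cos \theta_{v}-1)$ satisfies the unbiased criterion of Definition~\ref{definition-1}, so no bias is detected at the $\log \log x$ scale.

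There is essentially no obstacle here beyond the trigonometric bookkeeping: the genuine analytic content (convergence of the partial Euler products at $s=1/2$, the value of $\delta(M)$, and the entireness of $L(s, \mathrm{Sym}^{2} M)$) has already been established in Theorem~\ref{theorem1}, Lemma~\ref{delta=-1}, and the discussion preceding~\eqref{ord1}. The only subtle point worth flagging is sign accounting in the identity $\cos 2\theta-\cos \theta = (2\cos \theta+1)(\cos \theta-1)$, which is opposite in sign to the factorization $\cos 2\theta+\cos \theta = (2\cos \theta-1)(\cos \theta+1)$ used in Theorem~\ref{sym2}; this explains why the same two input formulas now generate an unbiased family rather than a positively biased one.
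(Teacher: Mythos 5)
Your proposal is correct and follows the paper's own argument exactly: subtract the $n=1$ asymptotic from the $n=2$ asymptotic, then apply the factorization $\cos 2\theta-\cos\theta = (2\cos\theta+1)(\cos\theta-1)$. Nothing further is needed.
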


\begin{proof}
It follows that $\eqref{n=2}-\eqref{n=1}$ equals
\begin{equation*}
\sum_{q_{v} \leq x} \frac{2(\cos2\theta_{v}-\cos \theta_{v})}{\sqrt{q_{v}}}
 = \sum_{q_{v} \leq x} \frac{2(2\cos \theta_{v}+1)(\cos \theta_{v}-1)}{\sqrt{q_{v}}}
=(m_{1}-m_{2}) \log \log x+O(1).
\end{equation*}
This concludes the proof of Theorem~\ref{unbiased}.
\end{proof}


\end{document}